\theoremstyle{plain}
\newtheorem{theorem}{Theorem}[section] 
\theoremstyle{definition}
\newtheorem{definition}[theorem]{Definition}
\newtheorem{remark}[theorem]{Remark}
\newtheorem{form}[theorem]{Formulation}
\newcommand{\spfhat}[2]{\ensuremath{ \left( #1 \text{ , } #2 \right)_{\hat{\Omega}_f}}}
\newcommand{\spshat}[2]{\ensuremath{ \left( #1 \text{ , } #2 \right)_{\hat{\Omega}_s}}}
\begin{document}

	\selectlanguage{english}
	\pagenumbering{arabic}

	\title{Parallel Block-Preconditioned Monolithic Solvers for Fluid-Structure-Interaction Problems}
	\author[1]{D. Jodlbauer}
	\author[2]{U. Langer}
	\author[3]{T. Wick}

	\affil[1]{Doctoral Program on Computational Mathematics, Johannes Kepler University, Altenbergerstr. 69, A-4040 Linz, Austria}
	\affil[2]{Johann Radon Institute for Computational and Applied Mathematics, Austrian Academy of Sciences, Altenbergerstr. 69, A-4040 Linz, Austria}
	\affil[3]{Institut f\"ur Angewandte Mathematik, Leibniz Universit\"at Hannover, Welfengarten 1, 30167 Hannover, Germany}

	\date{}

	\maketitle

	\begin{abstract}
		In this work, we consider the solution of fluid-structure interaction problems
		using a monolithic approach for the coupling between fluid and solid
		subproblems. The coupling of both equations is realized
		by means of
		the
		arbitrary Lagrangian-Eulerian framework and a nonlinear harmonic mesh motion model.
		Monolithic approaches require the solution of large, ill-conditioned linear systems of algebraic equations at every Newton step. Direct solvers tend to use too much memory even for a relatively small number of degrees of freedom,
		and, in addition, exhibit superlinear grow in arithmetic complexity.
		Thus, iterative solvers are the only viable option.
		To ensure convergence of iterative methods within a reasonable amount of iterations,
		good and, at the same time, cheap preconditioners have to be developed.
		We study physics-based block preconditioners,
		which are derived from the block $LDU$-factorization of the FSI Jacobian,
		and their performance on
		distributed memory parallel computers in terms of two- and three-dimensional test cases
		permitting large deformations.
	\end{abstract}

	\section{Introduction}

	Fluid-structure interaction problems (FSI) are important in many
	technical and life science applications.
	Air flow around the wings of an aircraft
	or the flow through rotating turbine blades
	are two of such  typical
	technical examples, see, e.g., \cite{Hsu2012,PiFa01}.
	In hemodynamics, the numerical simulation of the vascular blood flow  is another
	FSI example where the interactions between blood flow and the
	walls of the vessels must be taken into account. The simulation of the human heart
	and the analysis of aneurysms are important medical applications,
	see, e.g., \cite{FormaggiaQuarteroniVeneziani:2009} and \cite{BazilevsEtAl:2010}, respectively.
	More FSI applications are compiled in various books \cite{BuSc06,FoQuaVe09,GaRa10,BaTaTe13,BoGaNe2014,BazilevsTakizawa:2016,Ri17_fsi,FrHoRiWiYa17}.

	The traditional approach to the solution of FSI problems
	makes use of available fluid and solid solvers in
	an alternate iteration between fluid and solid,
	and an information exchange across the interface
	via the interface conditions.
	This class of solvers are called
	{\it partitioned solvers},
	see, e.g.,
	\cite{MokWa01,PiFa01,OldAddedMass,SaGeBou06,WaGeRa07,VieDuVe08,BaNoVe08,DeBrHaeVie08,DeHaeAnnBrVie10,HuiLa14a,LangerYang:2015a}
	and the references therein
	for recent developments of partitioned methods.
	Beside the advantage of using available solvers,
	often provided by different codes,
	there are several disadvantages of partitioned solvers.
	These disadvantages are connected with the high complexity,
	loss of robustness due to so-called added-mass effects,
	the difficulties connected with the error control
	of the fluid and solid iterations, and, last but not least,
	the sequentiality of the alternating iteration process.

	These drawbacks are the main reasons why  {\it monolithic solvers}
	have attracted more and more attention during the last decade;
	see, e.g.,
	\cite{Heil2004,TuHrBenchmark,BuSc06,FSI_Book,BaQuaQua08,Brummelen2008_solver,FSI:AMG,BaTaTe13,
		LaYa2016,LangerYang:2018a,Richter:GMG}.
	There are different approaches to construct monolithic solvers for
	the linear FSI system that arises at every linearization step.
	Monolithic geometric and algebraic multigrids can be used
	as solvers or preconditioners in connection with Krylov
	space methods like GMRES (generalized minimal residuals),
	see
	\cite{HrTu06a,RazzaqDamanikHronOuazziTurek:2012,Richter:GMG}
	and
	\cite{FSI:AMG,MayrKloeppelWallGee:2015,LangerYang:2018a}, respectively.
	Likewise domain decomposition methods can be exploited
	\cite{BarkerCai:2010,WuCai:2014,CNM:CNM2756}.
	Another starting point for deriving efficient preconditioners for Krylov subspace solvers
	is the block  $LDU$-factorization of the linearized FSI matrix.
	Different arrangements of the blocks and different approximations
	of the blocks in the factorization lead to
	different
	inexact block  $LDU$-factorizations
	that can serve as preconditioners in Krylov subspace solvers,
	see
	\cite{Heil2004,HeilHazelBoyle:2008,
		BaQuaQua08,BadiaQuainiQuarteroni:2008,
		ParallelFSI,Hemodynamics,
		CNM:CNM2756,
		MuddleMihajlovicHeil:2012,
		LaYa2016,LangerYang:2018a}
	and the references cited there.
	In the engineering community, this class of preconditioners
	are also called physics-based block preconditioners.
	One important advantage of physics-based block preconditioners is their modularity
	that allows the reuse of available solid, fluid and mesh movment (elliptic)
	solvers
	similar to the partitioned approach, but now as a part of the preconditioner.

	There are many publications on FSI problems with exciting applications
	from different areas, but
	there are to date only a few publications studying the parallel
	performance of FSI solvers.
	In \cite{BarkerCai:2010}, an overlapping domain decomposition
	(additive Schwarz) preconditioner for the GMRES solver that provides an inexact
	solve of the Jacobian system at each Newton step is proposed.
	This parallel solution technique, which was developed and tested
	for two-dimensional FSI problems in this  paper,
	has been extended to three dimensions in  \cite{WuCai:2014},
	see also  \cite{KongCai:2016}.
	This Newton-Krylov-Schwarz FSI solver shows a very good
	parallel perfomance, at least, for the examples studied in
	these papers. All numerical tests were performed for
	characteristic tube-like geometries typically arising in blood flow
	simulations and under the assumption of
	geometric and material linear elastic behavior of the solid,
	whereas the fluid is assumed to be a Newtonian fluid.
	For the same class of FSI problems, the parallel performance of physics-based block
	preconditioners were investigated in \cite{ParallelFSI}.
	Strong and weak scalability studies were presented for a moderate number of cores.
	In the very recent publications \cite{DeparisFortiGrandperrinQuarteroni:2016}
	and \cite{FortiQuarteroniDeparis:2017}, these studies have been extended
	to a larger number of cores, again including scalability tests, and to a nonconforming fluid-structure coupling
	via the internodes technique proposed in \cite{DeparisFortiGervasioQuarteroni:2016}.

	In this paper, we follow basically the developments
	of \cite{LaYa2016,LangerYang:2018a,JodlbauerWick:2017}
	to construct a monolithic
	GMRES solver
	preconditioned by a physics-based block preconditioner.
	The major novelty is the extension to high performance computing
	and a fully parallelized programming code. This code is
	dimension-independent and can simulate two-dimensional
	as well as
	three-dimensional configurations. In 2d, we consider
	a challenging FSI benchmark problem \cite{TuHrBenchmark,BuSc06} with
	large solid deformations. In 3d, we adopt another benchmark-like
	configuration \cite{Richter:GMG}. The main goals are to
	verify the functionals of interest to show that our modeling
	yields the correct physical results,
	and scalability tests in order
	to show the performance of the parallelization.

	The remainder of this paper is organized as follows:
	In Section~\ref{sec:fsi}, we recall the governing FSI equation
	in the Arbitrary Lagrangian-Eulerian (ALE) setting.
	Its line variational formulation, time discretization, Newton linearization,
	and spacial discretization are shortly described in Section~\ref{sec:discrete}.
	Section~\ref{sec:pc} introduces the approximate block-LDU-preconditioner
	that preconditions the GMRES solver that is used to solve the huge
	system of algebraic equations arising at each Newton step.
	The parallel implementation is described in Section~\ref{sec:parallel}.
	In Section~\ref{sec:results}, we present our numerical results for
	two- and three-dimensional benchmark problems including convergence
	and parallel performance studies.
	Finally, we draw our conclusions.

	\section{FSI Equations}
	\label{sec:fsi}

	We denote
	the computational domain of the
	fluid-structure interaction problem
	by $\Omega \subset \mathbb{R}^d$, $d=2,3$.
	This domain is supposed to be
	time independent but consists of two time dependent subdomains
	$\Omega_f (t)$ and $\Omega_s (t)$ with a moving interface $\Gamma_i
	(t) = \partial\Omega_f (t) \cap \partial\Omega_s (t)$. The initial (or
	later reference) domains are denoted by $\hat\Omega_f$ and
	$\hat\Omega_s$, respectively, with the interface
	$\hat\Gamma_i$. Further, we denote the outer boundary with
	$\partial\hat\Omega = \hat\Gamma = \hat\Gamma_D \cup \hat\Gamma_N$
	where $\hat\Gamma_D $ and $\hat\Gamma_N $ denote Dirichlet and Neumann
	boundaries, respectively. Our philosophy is to transform and solve all
	equations in $\hat\Omega$. A prototype setting of an FSI configuration is
	displayed in Figure \ref{fsi:geometry} in Section \ref{sec:results}.
	In addition to the spatial domains,
	we introduce the time interval $I:= (0,T]$ where $T>0$ is the end time value.

	Let us now briefly recall the governing equations for fluid-structure interaction.
	Usually, fluid and solid equations are modelled in different coordinate
	systems, namely Eulerian and Lagrangian coordinates. In order to couple those
	equations on a common interface, it is necessary to use a common coordinate
	system. A popular choice within the FSI framework is the (arbitrary) extension
	of the Lagrange coordinates from the solid domain into the fluid domain, also
	called ALE-coordinates \cite{HuLiZi81,DoGiuHa82}. Such an extension is given
	via the solution of an additional auxiliary problem, e.g. a harmonic extension
	in our case. A comparison of various extension methods for fluid-structure interaction
	problems can be found in \cite{TW:MMPDE}. This leads to the following system
	of equations \cite{HrTu06a,DuRaRi09,TW:MMPDE}:

	\begin{form}[Strong form of FSI equations]
		Find $\hat v_f:I\times\hat\Omega_f\to \mathbb{R}^d, \hat p_f:I \times \hat\Omega_f\to \mathbb{R}, \hat u_f:I \times \hat\Omega_f\to \mathbb{R}^d,\hat v_s: I \times \hat\Omega_f\to \mathbb{R}^d, \hat u_s:I \times \hat\Omega_s\to \mathbb{R}^d$:

		\begin{equation}
		\label{eq:fsi:strong}
		\begin{split}
		\hat{J} \hat{\rho}_f \hat{\partial}_t \hat{v}_f + \hat{J} \hat{\rho}_f \hat{\nabla} \hat{v}_f \hat{F}^{-1} \cdot (\hat{v}_f - \partial_t \hat{\mathcal{A}}) - \text{div}_R (\hat{J} \hat\sigma_f \hat{F}^{-T}) & = 0
		\\
		\hat{J} \ \text{tr}(\hat{\nabla} \hat{v}_f \hat{F}^{-1}) & = 0
		\\
		\\
		\hat{\rho}_s \partial_t \hat{v}_s - \text{div}_R (\hat{F} \hat{\Sigma}) & = 0
		\\
		\left( \partial_t \hat{u}_s - \hat{v}_f \right) & = 0
		\\
		\\
		- \text{div}_R (\frac{1}{\hat J}\nabla \hat{u}_f) & = 0.
		\end{split}
		\end{equation}
	\end{form}
	In the rest of this section, we explain all equations, terms and variables
	in more detail.
	The first set of equation describe the incompressible Navier-Stokes equations,
	the second set of equation describe nonlinear elastodynamics, and in the third
	set, a nonlinear harmonic mesh motion model \cite{StTeBe03} is adopted
	since we are interested in modeling large solid deformations.

	The interface and boundary conditions are given by the equations

	\begin{alignat}{3}
	\hat{J} \hat{\sigma}_f \hat{F}^{-T} \hat{n}_f
	+ \hat{F} \hat{\Sigma}_s & = 0 \text{ on }
	I \times \hat{\Gamma}_I \hspace{10mm}
	& \hat{v}_f & = 0 \text{ on } I\times \hat{\Gamma}_{top} \cup
	I \times \hat{\Gamma}_{bottom} \cup I \times \hat{\Gamma}_c
	\label{eq:fsi:strong:normal:stresses}
	\\
	\hat{v}_f - \hat{v}_s & = 0 \text{ on }
	I \times \hat{\Gamma}_I & \hat{v}_f & = g \text{ on }
	I \times \hat{\Gamma}_{in}
	\\
	\hat{u}_f - \hat{u}_s & = 0 \text{ on }
	I \times \hat{\Gamma}_I & \hat{u}_f & = 0 \text{ on }
	I \times \{\hat{\Gamma}_{top} \cup
	\hat{\Gamma}_{bottom} \cup
	\hat{\Gamma}_{in} \cup
	\hat{\Gamma}_{out} \cup \hat{\Gamma}_c\}
	\\
	& & \hat{u}_s & = 0 \text{ on } I \times \hat{\Gamma}_{cf}
	\\
	& & \hat{v}_s & = 0 \text{ on } I \times \hat{\Gamma}_{cf}
	\end{alignat}
	and
	\begin{equation}
	\label{eq:donothing}
	\hat{J} (-\hat{p}_f \hat{I}
	+ \hat{\rho}_f \hat{\nu}_f \hat{\nabla} \hat{v}_f \hat{F}^{-1}) \hat{F}^{-T}
	= 0 \text{ on } I\times \hat{\Gamma}_{out}.
	\end{equation}
	Thus, the flow regime is driven by the prescribed velocity
	profile $g$ at the inflow boundary $\hat{\Gamma}_{in}$.
	On the outflow boundary $\hat{\Gamma}_{out}$,
	the do-nothing condition \eqref{eq:donothing} are given  \cite{HeRaTu96},
	whereas homogeneous boundary conditions are stated otherwise.
	The initial conditions for the displacements and velocities are assumed to
	be homogeneous.

	The first equation in \eqref{eq:fsi:strong} is the incompressible Navier-Stokes system in ALE-coordinates with the ALE-mapping defined as following:
	\begin{equation}
	\hat{u}(t,\hat{x}) :=
	\begin{cases}
	\hat{u}_s(t,\hat{x}) & \hat{x} \in \hat{\Omega}_s \\
	\hat{u}_f(t,\hat{x}) & \hat{x} \in \hat{\Omega}_f
	\end{cases}
	\hspace{10mm}
	\text{ and }
	\hspace{10mm}
	\hat{\mathcal{A}}(t, \hat{x}) := \hat{x} + \hat{u}(t, \hat{x}) \text{ for } \hat{x} \in \hat\Omega,
	\end{equation}
	followed by the equations for the solid. Of course in the solid domain the
	ALE-mapping
	is nothing else than the standard coordinate transformation between Lagrangian
	and Eulerian variables.

	The last equation in \eqref{eq:fsi:strong} defines the nonlinear harmonic extension of the solid-displacement into the fluid-domain, yielding the additional mesh-motion variable given by $\hat{u}_f$. The solid displacement and velocity are denoted by $\hat{u}_s$ and $\hat{v}_s$.
	The fluid velocity and pressure in ALE-coordinates are given by $\hat{u}_f, \hat{p}_f$.

	The term $\hat{F} = \hat{I} + \hat{\nabla}\hat{u}$ denotes the gradient of the ALE mapping,
	and its
	determinant is given by $\hat{J}$. The solid stress tensor is chosen
	according to the Saint Venant Kirchhoff material law (STVK) as
	\[
	\hat{\Sigma} = 2 \mu \hat{E} + \lambda \text{tr}(\hat{E}) \hat{I},
	\]
	with the strain tensor $\hat{E}$ given by $\hat{E}
	= \frac{1}{2}(\hat{\nabla} \hat{u} + \hat{\nabla} \hat{u}^T
	+ \hat{\nabla} \hat{u}^T \hat{\nabla} \hat{u})$ and the well-known Lam\'{e}
	parameters $\lambda$ and $\mu$.
	The fluid stress tensor is given by
	\[
	\hat{\sigma}_f = -\hat{p}_f \hat{I} + \hat{\rho}_f \hat{\nu}_f
	(\hat{\nabla} \hat{v}_f \hat{F}^{-1} + \hat{F}^{-T} \hat{\nabla} \hat{v}_f ^
	T).
	\]
	Here, $\hat I$ is the identity matrix. Moreover, the
	kinematic viscosity of the fluid is denoted by $\hat{\nu}_f$.
	Material densities of fluid and solid are denoted by $\hat{\rho}_f$ and $\hat{\rho}_s$, respectively.
	Interface conditions consist of the coupling of stresses, which propagate forces from one subdomain into the other, as well as the continuity of velocities and displacements resulting from the no-slip condition and the definition of the ALE-extension.

	\section{{Variational Formulation and Discretization}}
	\label{sec:discrete}

	Within the next subsections, we discuss the usual steps to
	derive
	a discretized version of the nonlinear, time-dependent FSI system \eqref{eq:fsi:strong} - \eqref{eq:donothing}.

	\subsection{Line Variational Formulation}

	As usual the finite element discretization starts from a variational
	formulation of our problem. Special care has to be taken on the
	interface when defining the test and
	trial functions.
	In order to circumvent Bochner function spaces, we define the
	equations on the time-space continuous level for almost all times and
	only specify the spatial spaces in more detail.
	For almost all times $t$, we seek
	\begin{alignat*}{2}
	( \hat{u}_s, \hat{u}_f ) \in V_u &:= \{ \hat{u}_s \in V_u^s, \hat{u}_f \in H^1(\hat{\Omega}_f)^{d}: \
	&&\hat{u}_s = 0 \text{ on } \partial \hat{\Omega}_s \backslash \hat{\Gamma}_I,
	\hat{u}_f = 0 \text{ on } \partial \hat{\Omega}_f \backslash \hat{\Gamma}_I,
	\hat{u}_s = \hat{u}_f \text{ on } \hat{\Gamma}_I \},
	\\
	( \hat{v}_s, \hat{v}_f ) \in V_v &:= \{ \hat{v}_s \in V_v^s, \hat{v}_f \in H^1(\hat{\Omega}_f)^{d}: \
	&&\hat{v}_s = 0 \text{ on } \partial \hat{\Omega}_s \backslash \hat{\Gamma}_I,
	\hat{v}_f = 0 \text{ on } \partial \hat{\Omega}_f \backslash (\hat{\Gamma}_I \cup \hat{\Gamma}_{in}),
	\\
	& &&\hat{v}_f = g(t) \text{ on } \hat{\Gamma}_{in},
	\hat{v}_s = \hat{v}_f \text{ on } \hat{\Gamma}_I
	\}
	\end{alignat*}
	and $\hat{p}_f \in V_p := L^2(\hat{\Omega}_f)$.
	The vector-valued spaces $V_u^s$ and $V_v^s$ denote the respective function spaces for the solid displacement and velocity, taking into account the non-linear structure of the equations, see, e.g., \cite{Ciarlet90,Ball76}.
	The test spaces are similar to the trial spaces:
	for the velocity fields, we take
	\[
	( \hat{\varphi}^v_s, \hat{\varphi}^v_f ) \in V_v^0 := V_v^0 := V_v \text{ with } g(t) \equiv 0.
	\]
	For the displacements, we employ the modification
	$\hat\varphi^u_v = 0$ on  $\hat\Gamma_I$.
	This avoids a non-physical coupling from the fluid mesh back to the solid displacements. The pressure test space coincides with its trial space.

	This leads to the variational formulation of our FSI-equations. Note that the interface terms vanish because of the continuity of stresses and the respective test-functions.

	\begin{form}[Line variational formulation of FSI-ALE]
		Find $\left( (\hat{u}_s, \hat{u}_f), (\hat{v}_s, \hat{v}_f), \hat{p}_f \right)$ in $V_u \times V_v \times V_p$ such that the following variational equations are satisfied for almost all times $t \in I$: 

		\begin{equation}
		\label{eq:weak:FSI}
		\begin{split}
		\spfhat{\hat{J} \hat{\rho}_f \hat{\partial}_t \hat{v}_f}{\hat{\varphi}^v_f}
		+ \spfhat{\hat{J} \hat{\rho}_f \hat{\nabla} \hat{v}_f \hat{F}^{-1} \cdot ( \hat{v}_f - \hat{w} ) }{\hat{\varphi}^v_f}
		+ \spfhat{\hat{J} \hat{F}^{-T} \hat{\sigma}_f}{\hat{\nabla} \hat{\varphi}^v_f} &
		\\
		- \left\langle \hat{\rho}_f \hat{\nu}_f \hat{J} \hat{F}^{-T} \hat{\nabla} \hat{v}_f^T \hat{F}^{-T} \cdot \hat{n}_f \ , \hat{\varphi}^v_f \right\rangle_{\hat{\Gamma}_{out}}	& = 0
		\\
		\spfhat{\hat{J} \ \text{tr}(\hat{\nabla} \hat{v}_f \hat{F}^{-1})}{\hat{\varphi}^p_f} & = 0
		\\
		\\
		\spshat{\hat{\rho}_s \hat{\partial}_t \hat{v}_s}{\hat{\varphi}^v_s}
		+ \spshat{\hat{F} \hat{\Sigma}_s}{\hat{\nabla} \hat{\varphi}^v_s} & = 0
		\\
		\spshat{\hat{\partial}_t \hat{u}_s - \hat{v}_s}{\hat{\varphi}^u_s} & = 0
		\\
		\\
		\spfhat{\dfrac{1}{\hat{J}} \hat{\nabla} \hat{u}_f}{\hat{\nabla} \hat{\varphi}^u_f} & = 0
		\end{split}
		\end{equation}
		%
		for all test-functions $\left( (\hat{\varphi}^u_s, \hat{\varphi}^u_f), (\hat{\varphi}^v_s, \hat{\varphi}^v_f), \hat{\varphi}^p_f \right)$ in $V_u \times V_v^0 \times V_p$,
		where $(\cdot ,\cdot)_\Omega$ denotes the $L^2(\Omega)$ inner product for
		vector-functions, and
		$\left\langle \cdot, \cdot \right\rangle_{\hat{\Gamma}_{out}}$ is nothing but
		the duality product of the trace of functions from $H^1(\hat{\Omega}_f)^{d}$
		on $\hat{\Gamma}_{out}$ with functionals from the dual space.
	\end{form}

	\begin{remark}[Well-posedness of the nonlinear fluid mesh motion problem]
		The nonlinear fluid mesh motion problem is a quasi-linear
		problem (for the definition of `quasi-linear' we refer to \cite{Evans2010}), which can be analyzed in the framework of monotone operators (chapter 9 of \cite{Evans2010}) as long as we can guarantee that $\hat J > 0$. Then, the solution $\hat{u}_f$ is indeed a $H^1$ function. We notice that the chosen function spaces for the fluid and solid subproblems are conforming with the available theory \cite{Temam:NavierStokes,Grandmont2002,Ball76,Ciarlet90}. Of course a complete well-posedness analysis is not available yet in the literature.
	\end{remark}

	\begin{remark}[Well-posedness of fluid-structure interaction]
		We assume in the following that there exists a unique smooth solution for the
		variational FSI problem. For more information on the assumptions imposed on initial
		data and regularity of the domain to guarantee existence and uniqueness for FSI,
		we refer for example to \cite{WellPosednessFSI,Grandemont2008,CoutandShkoller,CouShk06}.
	\end{remark}

	\subsection{Discretization in Time}

	For the time discretization, we employ a one-step theta scheme as given below.
	\begin{definition}[One-Step-Theta Scheme]
		Given a differential equation $a(u) \partial_t u + A(u) = 0$, the one-step-theta scheme reads as follows:
		$$
		\left(\theta \ a(u^n) + (1-\theta) \ a(u^{n-1}) \ (u(t^n) - u(t^{n-1})\right) - \Delta t \ \theta \ A(u^n) - \Delta t \ (1-\theta) \ A(u^{n-1}) = 0
		$$
		with $\theta \in [0,1]$ and $\Delta t := t^n - t^{n-1}$.
	\end{definition}

	Different values of $\theta$ result in time-stepping schemes with different properties. Popular choices are
	$\theta = 0$ (Explicit Euler) \cite{timestepping},
	$\theta = 0.5$ (Crank-Nicolson),
	$\theta = 0.5 + \Delta t$ (Shifted Crank-Nicolson) \cite{HeRa90},
	$\theta = 1$ (Implicit Euler),
	or the
	fractional-step-theta scheme \cite{BristeauGlowinskiPeriaux1987}.

	The explicit Euler method would require too small timesteps
	in order to be stable,
	whereas the Implicit Euler method dampens the oscillations too much.
	Hence, we focus on the Crank-Nicolson method and its shifted variant,
	since these methods provide the correct results.
	Detailed computational comparisons of these time-stepping
	schemes for fluid-structure interaction problems
	were performed in \cite{Wi11_phd,RiWi14_springer}. In \cite{Wi15_Hanoi} it has
	been shown that the choice of $\theta$ significantly influences the solid
	displacement. Choosing $\theta = 0.6$ (or even larger up to $\theta = 1$) does not yield any deformation
	of the elastic beam in the FSI-2 benchmark.
	\begin{remark}[Implicit Pressure]
		The pressure term within the Navier-Stokes equations is treated fully implicit,
		i.e., just as in the case $\theta = 1$,
		independent of the actual choice of $\theta$.
		This is motivated by the theory of differential algebraic equations, see e.g. \cite{Weickert:StokesDAE}.
	\end{remark}

	\subsection{Linearization}

	The nonlinearities within our FSI system are treated by Newton's method. Details are presented in \cite{Fernandez:Newton}.

	\begin{algorithm}
		Let $A(u)(\varphi)$ be a semi-linear form (linear with respect to the second argument), $F(\varphi)$ a linear form. Then the solution of $A(u)(\varphi) = F(\varphi)$ can be obtained by the following iteration:
		\begin{algorithmic}[1]
			\State Initial guess $u_0$
			\For{$k=0, 1, \dots$ until convergence}
			\State Solve $A'(u_k)(\varphi, \delta u_k) = F(\varphi) - A(u_k)(\varphi)$ for $\delta u_k$
			\State Update $u_{k+1} := u_k + \delta u_k$
			\EndFor
		\end{algorithmic}
		\caption{Newton Linearization}
		\label{alg:newton}
	\end{algorithm}

	For the differentiation of the whole FSI operator (\ref{fsi:all}), we need the derivatives of quantities like $\hat{J}, \hat{F}, \hat{F}^{-1}$ and others. Since we will use the notation $A'$ for the full Jacobian, we denote the following derivatives by
	$\partial_u \hat{F}(u)$ or simply $\partial \hat{F}$ etc., if it is clear by which variable we are differentiating.

	\begin{theorem}[FSI-related Derivatives]
		It holds:
		\begin{equation}
		\begin{split}
		\partial \hat{F} & = \hat{\nabla} \delta \hat{u} \\
		\partial \hat{J} & = \hat{J} \text{tr}(\hat{F}^{-1} \hat{\nabla} \delta \hat{u}) \\
		\partial \hat{F}^{-1} & = - \hat{F}^{-1} \hat{\nabla} \delta \hat{u} \hat{F}^{-1} \\
		\partial \hat{F}^{-T} & = (\partial \hat{F}^{-1})^T \\
		\partial \text{tr}(E) & = \text{tr}(\partial E)
		\end{split}
		\end{equation}
	\end{theorem}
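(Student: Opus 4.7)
The plan is to obtain all five identities by applying the standard rules of directional differentiation in the variable $\hat{u}$, in the direction $\delta\hat{u}$, using only the definition $\hat{F} = \hat{I} + \hat{\nabla}\hat{u}$ and routine matrix calculus.

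First I would settle the easy pieces. Since $\hat{u} \mapsto \hat{F}(\hat{u})$ is affine, its directional derivative at any point in direction $\delta\hat{u}$ equals its linear part $\hat{\nabla}\delta\hat{u}$, which gives the first identity. The trace identity $\partial\,\text{tr}(E) = \text{tr}(\partial E)$ follows because the trace is a continuous linear functional on matrices and linear operations commute with differentiation. Similarly, transposition is linear, so $\partial\hat{F}^{-T} = (\partial\hat{F}^{-1})^T$ is automatic once $\partial\hat{F}^{-1}$ is known.

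For $\partial\hat{F}^{-1}$, I would differentiate the matrix identity $\hat{F}\hat{F}^{-1} = \hat{I}$ via the Leibniz rule to obtain $(\partial\hat{F})\hat{F}^{-1} + \hat{F}(\partial\hat{F}^{-1}) = 0$, then isolate $\partial\hat{F}^{-1}$ by left-multiplication with $\hat{F}^{-1}$ and substitute the first identity for $\partial\hat{F}$. Here one implicitly invokes $\hat{J} > 0$, already assumed in the preceding well-posedness remark, which guarantees invertibility of $\hat{F}$.

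The only step requiring more than linearity or the product rule is the derivative of the determinant, $\partial\hat{J} = \hat{J}\,\text{tr}(\hat{F}^{-1}\hat{\nabla}\delta\hat{u})$, and this is the piece I would call the main obstacle, although it is a classical identity. I would establish it via Jacobi's formula: for an invertible matrix $A$, $\partial\det(A) = \det(A)\,\text{tr}(A^{-1}\,\partial A)$. This can be derived either by differentiating $\log\det A$ (legitimate because $\det\hat{F} = \hat{J} > 0$), or, more elementarily, from the cofactor relation $\partial_{A_{ij}}\det(A) = \det(A)(A^{-T})_{ij}$ combined with the chain rule. Applying this with $A = \hat{F}$ and the already proved $\partial\hat{F} = \hat{\nabla}\delta\hat{u}$ yields the stated expression and completes the theorem.
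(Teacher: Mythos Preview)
Your proposal is correct and follows the standard matrix-calculus route; the paper itself gives no actual argument beyond ``Unless trivial, see for example \cite{KontMech, Elasticity}'', so your write-up is in fact more explicit than the original while remaining entirely in line with what the references would contain.
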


	\begin{proof}
		Unless trivial, see for example \cite{KontMech, Elasticity}.
	\end{proof}

	\newpage
	The semi-linear form $A(U)(\Phi)$ and linear form $F(\Phi)$ are given as the sum of all equations in (\ref{eq:weak:FSI}).

	\begin{equation}
	\label{fsi:all}
	\begin{split}
	A(U)(\Phi) & = \spfhat{\hat{\rho}_f \hat{J}^\theta (\hat{v}_f - \hat{v}_f^{n-1}) }{\hat{\varphi}^v_f} - \spfhat{\hat{J} \hat{\rho}_f \hat{F}^{-1} \hat{\nabla} \hat{v}_f \cdot \hat{u}_f}{\hat{\varphi}^v_f}
	\\
	& +	\Delta t \ \theta \left[ \spfhat{\hat{J} \hat{\rho}_f \hat{F}^{-1} \hat{\nabla} \hat{v}_f \cdot \hat{v}_f}{\hat{\varphi}^v_f}
	+ \spfhat{\hat{J} \hat{\sigma}_f^v \hat{F}^{-T}}{\hat{\nabla} \hat{\varphi}^v_f} \right]
	\\
	& - \Delta t \ \theta \left\langle \hat{\rho}_f \hat{\nu}_f \hat{J} \hat{F}^{-T} \hat{\nabla} \hat{v}_f^T \hat{F}^{-T} \cdot \hat{n}_f \ , \hat{\varphi}^v_f \right\rangle_{\hat{\Gamma}_{out}}
	\\
	& + \spfhat{\hat{J} (-\hat{p}_f) \hat{F}^{-T}}{\hat{\nabla} \hat{\varphi}^v_f}
	\\
	& + \spshat{\hat{\rho}_s \hat{v}_s^n }{\hat{\varphi}^v_s}
	+ \Delta t \ \theta \spshat{\hat{F} \hat{\Sigma}_s}{\hat{\nabla} \hat{\varphi}^v_s}
	\\
	& +	\spshat{\hat{u}_s}{\hat{\varphi}^u_s} - \Delta t \ \theta \ \spshat{\hat{v}_s}{\hat{\varphi}^u_s}
	\\
	& + \spfhat{\dfrac{1}{\hat{J}} \hat{\nabla} \hat{u}_f}{\hat{\nabla} \hat{\varphi}^u_f}
	\end{split}
	\end{equation}
	and
	\begin{equation}
	\begin{split}
	F(\Phi) & = \spshat{\hat{\rho}_s \hat{v}_s^{n-1}}{\hat{\varphi}^v_s}
	- \Delta t \ (1 - \theta) \spshat{\hat{F}^{n-1} \hat{\Sigma}_s^{n-1}}{\hat{\nabla} \hat{\varphi}^v_s}
	\\
	& + \spshat{\hat{u}_s^{n-1}}{\hat{\varphi}^u_s} + \Delta t \ (1 - \theta) \ \spshat{\hat{v}_s^{n-1}}{\hat{\varphi}^u_s}
	\\
	& - \spfhat{\hat{J} \hat{\rho}_f \hat{F}^{-1} \hat{\nabla} \hat{v}_f \cdot \hat{u}_f^{n-1}}{\hat{\varphi}^v_f}
	\\
	& - \Delta t \ (1-\theta) \left[ \spfhat{\hat{J} \hat{\rho}_f \hat{F}^{-1} \hat{\nabla} \hat{v}_f \cdot \hat{v}_f}{\hat{\varphi}^v_f}
	+ \spfhat{\hat{J} \hat{\sigma}_f^v \hat{F}^{-T}}{\hat{\nabla} \hat{\varphi}^v_f} \right]^{n-1}
	\\
	& + \Delta t \ (1-\theta) \left[ \left\langle \hat{\rho}_f \hat{\nu}_f \hat{J} \hat{F}^{-T} \hat{\nabla} \hat{v}_f^T \hat{F}^{-T} \cdot \hat{n}_f \ , \hat{\varphi}^v_f \right\rangle_{\hat{\Gamma}_{out}} \right]^{n-1}
	\end{split}
	\end{equation}
	with the test-function $\Phi :=
	(\hat{\varphi}^v_f, \hat{\varphi}^u_f, \hat{\varphi}^p_f, \hat{\varphi}^v_s, \hat{\varphi}^u_s)$
	and the solution variable $U := (\hat{v}_f, \hat{u}_f, \hat{p}_f, \hat{v}_s, \hat{u}_s)$.

	Due to the nested non-linearities, which all require multiple
	applications of the product rule, the computation of all single terms
	of the Jacobian is quite lengthy. However, when implementing the
	derivatives, we do not actually need to expand all terms.
	Therefore, we do not write them down explicitly here for the
	convenience of the reader.

	\subsection{Spatial Discretization}
	Now we are in the position to discretize the Jacobian and Newton
	residual given in Algorithm \ref{alg:newton}.
	We are going to use a quadrilateral ($2d$) or hexahedral ($3d$)
	subdivision
	$ \mathcal{T}_h $ of the reference domain
	${\overline{\hat{\Omega}}} = \bigcup_{T \in \mathcal{T}_h} \overline{T}$
	with $Q(k)$ shape functions for displacements and velocities, and discontinuous $P(k-1)$ elements for the pressure, where $h$ denotes the usual discretization parameter.
	The subdivision matches the interface, i.e., every element $T$ is either part of the fluid or the solid domain, but not both.

	As indicated above we use the following discrete function spaces
	\begin{equation}
	\begin{split}
	V_h^s & := \left\{ v \in H^1(\hat{\Omega}_s)^d : v \bigr\rvert_{T} \in Q(k) \hspace{3mm} \forall\; T \subset \hat{\Omega}_s \right\}, \\
	V_h^f & := \left\{ v \in H^1(\hat{\Omega}_f)^d : v \bigr\rvert_{{T}} \in Q(k) \hspace{3mm} \forall\; T \subset \hat{\Omega}_f \right\}, \\
	L_h^f & := \left\{ v \in L^2(\hat{\Omega}_f) : v \bigr\rvert_{{T}} \in P(k-1) \hspace{1mm} \forall \;  T \subset  \hat{\Omega}_f \right\}
	\end{split}
	\end{equation}
	with the nodal basis functions
	\begin{equation}
	\begin{split}
	V_h^s & = span\{\varphi_{s,h}^v[j], \ j = 1,\dots, N_{v_s} \} = span\{\varphi_{s,h}^u[j], \ j = 1,\dots, N_{u_s} \}, \\
	V_h^f & = span\{\varphi_{f,h}^v[j], \ j = 1,\dots, N_{v_f} \} = span\{\varphi_{f,h}^u[j], \ j = 1,\dots, N_{u_f} \}, \\
	L_h^f & = span\{ \varphi_{f,h}^p[j], \ j = 1,\dots, N_{p_f} \}.
	\end{split}
	\end{equation}

	Using the ansatz $$\delta U := \sum_{j = 1}^{N_{FSI}} \delta U_j \Phi_j$$ for the Newton correction at each step $k$ leads to the linear system $A_h \delta U_h = F_h$, with the system matrix
	\begin{equation}
	\label{mat:small}
	A_h :=
	\begin{bmatrix}
	\mathcal{M} & \mathcal{C}_{ms} & 0 \\
	\mathcal{C}_{sm} & \mathcal{S} & \mathcal{C}_{sf} \\
	\mathcal{C}_{fm} & \mathcal{C}_{fs} & \mathcal{F}
	\end{bmatrix},
	\end{equation}
	where $\mathcal{M}, \mathcal{S}$ and $\mathcal{F}$ denote the discrete versions of the mesh-motion, solid and fluid equations, respectively. The coupling terms $\mathcal{C}_{**}$ arise because of the ALE transformation and interface coupling conditions.

	\section{Approximate Block-LDU - Preconditioner}
	\label{sec:pc}

	\subsection{Approximate factorization}

	By treating our block system as a simple $3 \times 3$ matrix, we can (formally) apply an LDU-factorization, yielding the following decomposition:

	\begin{equation}
	\begin{bmatrix}
	I & 0 & 0 \\
	\frac{C_{sm}}{M} & I & 0 \\
	\frac{C_{fm}}{M} & \frac{C_{fs}-\frac{C_{fm}}{M}}{\mathcal{S} - \frac{\mathcal{C}_{sm} \mathcal{C}_{ms}}{\mathcal{M}}} & 1 \\
	\end{bmatrix}
	\begin{bmatrix}
	\mathcal{M} & 0 & 0 \\
	0 & \mathcal{S} - \frac{\mathcal{C}_{sm} \mathcal{C}_{ms}}{\mathcal{M}} & 0 \\
	0 & 0 & \mathcal{F}-\frac{\mathcal{C}_{sf} \left(\mathcal{C}_{fs}-\frac{\mathcal{C}_{fm} \mathcal{C}_{ms}}{\mathcal{M}}\right)}{\mathcal{S} - \frac{\mathcal{C}_{sm} \mathcal{C}_{ms}}{\mathcal{M}}} \\
	\end{bmatrix}
	\begin{bmatrix}
	I & \frac{\mathcal{C}_{ms}}{\mathcal{M}} & 0 \\
	0 & I & \frac{\mathcal{C}_{sf}}{\mathcal{S}- \frac{\mathcal{C}_{sm}\mathcal{C}_{ms}}{\mathcal{M}}} \\
	0 & 0 & I \\
	\end{bmatrix}
	\end{equation}
	making slight abuse of the notation by using $\frac{A}{B}$ as shortcut for $AB^{-1}$ or $B^{-1}A$.
	Using this LDU-factorization directly is obviously not
	very efficient since it involves the computation of too many inverses
	(displayed as fractions) and matrix-matrix products.

	In order to simplify the computation, we drop the term $\mathcal{C}_{sm}$ from our system. This is justified by the fact that this coupling term corresponds to the term which is set to zero on the interface to avoid the (non-physical) coupling from the mesh into the solid equations.

	The simplified LDU decomposition is then given as

	\begin{equation}
	\begin{bmatrix}
	I & 0 & 0 \\
	0 & I & 0 \\
	\frac{\mathcal{C}_{fm}}{\mathcal{M}} & \frac{\tilde{\mathcal{C}}_{fs}}{\mathcal{S}} & 1 \\
	\end{bmatrix}
	\begin{bmatrix}
	\mathcal{M} & 0 & 0 \\
	0 & \mathcal{S} & 0 \\
	0 & 0 & \mathcal{X} \\
	\end{bmatrix}
	\begin{bmatrix}
	I & \frac{\mathcal{C}_{ms}}{\mathcal{M}} & 0 \\
	0 & I & \frac{\mathcal{C}_{sf}}{\mathcal{S}} \\
	0 & 0 & I \\
	\end{bmatrix}
	\end{equation}

	with $\tilde{\mathcal{C}}_{fs} = \mathcal{C}_{fs}-\frac{\mathcal{C}_{fm} \mathcal{C}_{ms}}{\mathcal{M}}$ and $\mathcal{X} = \mathcal{F}-\frac{\mathcal{C}_{fs} \tilde{\mathcal{C}}_{sf}}{\mathcal{S}}$.

	In order to apply solvers like sparse-LU or AMG methods, which require explict knowledge about the matrix entries, we would have to compute $\mathcal{X}$ explicitly. However, this involves the explicit computation of $\mathcal{S}^{-1}$, which we want to avoid. One possibility is to replace $\mathcal{S}$ by a block-diagonal approximation that can be easily inverted, as shown in \cite{LaYa2016}. In our application, we simply ignore the perturbation $\frac{\mathcal{C}_{sf} \tilde{\mathcal{C}}_{fs}}{\mathcal{S}}$, and observe almost no difference compared to the block-diagonal approximation. This factorization can now be applied to our block-system as given in Algorithm \ref{alg:LY}.

	\begin{remark}
		The LDU-factorization depends on the ordering of the blocks and is therefore not unique, giving rise to many different preconditioners. For a comparison of some of them as well as further numerical results we refer to our previous work
		\cite{JodlbauerWick:2017}.
	\end{remark}

	\begin{algorithm}
		\begin{algorithmic}[1]
			\State Solve $x_m = \mathcal{M}^{-1} r_m$
			\State Solve $x_s = \mathcal{S}^{-1} r_s$
			\State Solve $x_f = \mathcal{F}^{-1} (r_f - \mathcal{C}_{fm} x_m - \mathcal{C}_{fs} x_s)$
			\State Update $x_s = x_s - \mathcal{S}^{-1} \mathcal{C}_{sf} x_f$
			\State Update $x_m = x_m - \mathcal{M}^{-1} \mathcal{C}_{ms} x_s$
		\end{algorithmic}
		\caption{Evaluation of $P^{-1} r$.}
		\label{alg:LY}
	\end{algorithm}

	The approximate solution of the subproblems, i.e.,
	application of the inverses in Algorithm~\ref{alg:LY},
	is discussed in the next sections.

	\subsection{Solving the Mesh Subproblem}

	In our configuration, the mesh-motion equation is a scaled Laplace-type equation. Thus, available AMG methods like those provided by
	the Trilinos package \cite{Trilinos, Trilinos:ML} are good
	candidates for the approximation of $\mathcal{M}^{-1}$.

	\subsection{Solving the Solid Subproblem}

	For the solution of the solid equations, we employ a Schur-complement approach, which eliminates the equations related to the solid velocity $\hat{v}_s$.
	Here, matrices $M_{**}$ denote mass-matrices, and $K$ denotes the matrix resulting from $\spshat{\hat{F} \hat{\Sigma}_s}{\hat{\nabla} \hat{\varphi}^v_s}$. The solution of
	the linear system
	\begin{equation*}
	\begin{bmatrix}
	\hat\rho_s M_{vv} & \Delta t \ \theta K_{vu} \\
	-\Delta t \ \theta M_{uv} & M_{uu}
	\end{bmatrix}
	\begin{bmatrix}
	x_{v_s} \\ x_{u_s}
	\end{bmatrix}
	=
	\begin{bmatrix}
	r_{v_s} \\ r_{u_s}
	\end{bmatrix}
	\end{equation*}
	is equivalent to solving
	\begin{equation*}
	\begin{bmatrix}
	\hat\rho_s M_{vv} + \Delta t^2 \ \theta^2 K_{vu} M_{uu}^{-1} M_{uv} & 0 \\
	-\Delta t \ \theta M_{uv} & M_{uu}
	\end{bmatrix}
	\begin{bmatrix}
	x_{v_s} \\ x_{u_s}
	\end{bmatrix}
	=
	\begin{bmatrix}
	r_{v_s} - \Delta t \ \theta K_{vu} M_{uu}^{-1} r_{u_s} \\ r_{u_s}
	\end{bmatrix}
	\end{equation*}

	Since we use equal-order elements for the displacement and velocity variables,
	the mass matrices $M_{**}$ are all equal (possibly after reordering the dofs).
	Hence, we can simplify the system above to
	\begin{equation}
	\begin{bmatrix}
	\hat\rho_s M + \Delta t^2 \ \theta^2 K_{vu} & 0 \\
	-\Delta t \ \theta \ M & M
	\end{bmatrix}
	\begin{bmatrix}
	x_{v_s} \\ x_{u_s}
	\end{bmatrix}
	=
	\begin{bmatrix}
	r_{v_s} - \Delta t \ \theta K_{vu} M^{-1} r_{u_s} \\ r_{u_s}
	\end{bmatrix}
	\end{equation}
	This system is then solved in a similar fashion as the global system:

	\begin{algorithm}
		\begin{algorithmic}[1]
			\State Solve $x_{v_s} = (\hat\rho_s M + \Delta t^2 \ \theta^2 K_{vu})^{-1} (r_{v_s} - \Delta t \ \theta K_{vu} M^{-1} r_{u_s})$
			\State Solve $x_{u_s} = M^{-1} (r_{u_s} + \Delta t \ \theta M x_{v_s})$
		\end{algorithmic}
		\caption{Evaluation of $P^{-1} r$.}
	\end{algorithm}

	\noindent
	We do not require the exact realization of the application of the occurring inverses, but again use an AMG-solver to obtain reasonable approximations to $(\hat\rho_s M + \Delta t^2 \ \theta^2 K_{vu})^{-1}$ and $M^{-1}$.

	\subsection{Solving the Fluid Subproblem}

	For the fluid inverse $\mathcal{F}^{-1}$, we employ the same Schur-complement
	approach as for the solid system (an Uzawa-like method). Therein, we use an
	AMG-preconditioned GMRES solver to approximate the action of the occurring
	inverses. The additional solver was necessary to cope with
	large solid deformations,
	because
	a standalone ML-AMG (multi-level algebraic multigrid)
	method did no longer suffice.

	\section{Parallel Implementation}
	\label{sec:parallel}

	The implementation uses the C++ library
	deal.II \cite{BangerthHartmannKanschat2007,dealII84}, with the
	Trilinos package \cite{Trilinos} for linear algebra operations and its
	multi-level package ML \cite{Trilinos:ML}. Partitioning of the mesh is
	done using ParMETIS \cite{KarKu99}.

	\subsection{Mesh Partitioning}

	In a distributed setting, each core only stores parts of the problem. Hence, the mesh is split into several subdomains using ParMETIS. In our tests, we considered the following splitting strategies:

	\begin{itemize}
		\item shared: each core owns parts of both, the fluid and the solid domain
		\item split: each core owns either parts of the fluid or the solid domain, but not both
		\item default: no distinction is made between fluid and solid subdomains
	\end{itemize}

	\begin{figure}[ht]
		\centering
		\includegraphics[width=0.85\textwidth]{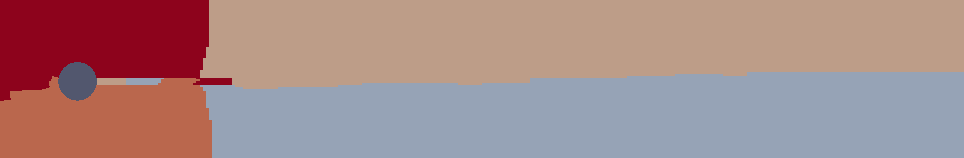}
		\caption{Shared configuration using four subdomains.}
	\end{figure}

	\begin{figure}[ht]
		\centering
		\includegraphics[width=0.85\textwidth]{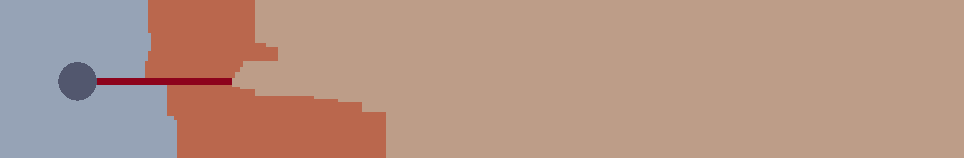}
		\caption{Split configuration using four subdomains. Load balancing issues are obvious.}
	\end{figure}

	Obviously, the split-strategy suffers from load balancing issues if a small number of cores is used, since it is not possible to obtain an almost uniform distribution of the dofs. This kind of problem is avoided using the shared-type partitioning. On the other hand, the split-strategy may require less communication than the shared-strategy due to the separation of fluid and solid. In addition to the owned cells, each core also obtains information about the neighboring cells, which are referred to as ghost cells or ghost layer.

	\subsection{Interface coupling}

	We use global variables for the displacement, velocity and pressure. The benefit of this approach is
	the fact
	that all interface conditions, namely the continuity of shape-functions, velocities and displacements, are automatically fulfilled. The natural interface condition \eqref{eq:fsi:strong:normal:stresses}, the continuity of normal stresses, is fulfilled in a weak sense.

	\subsection{Distribution of dofs}

	Degrees of freedoms are distributed in a similar manner as the mesh. Each
	CPU
	owns the dofs located in the interior of its associated subdomain. Dofs located on the interface between different cores are assigned to one of the adjacent ones.
	Hence, each dof is owned by exactly one core, although possibly more cores may acquire information about this dof. The set of all dofs that are required by a specific core is called the set of locally relevant dofs within the deal.II library.

	\section{Numerical Results}
	\label{sec:results}

	In this section, we consider two test cases to demonstrate the computational performance of
	the solvers presented in Section~\ref{sec:pc}.
	In the first example, we focus on the FSI-2 benchmark \cite{TuHrBenchmark}, which exhibits large solid deformations and is a well-known difficult test problem.
	The second example, which describes the flow around an obstacle,
	is a three-dimensional configuration inspired by \cite{Richter:GMG}. In both test cases, we first focus on the correct physics and reproduce the quantities of interest published in the literature.
	Then, we discuss the parallel performance of the preconditioner from  Section~\ref{sec:pc}.
	We notice that numerical results regarding robustness, $h$ and $\Delta t$-dependence
	have been presented in \cite{Jodl16,JodlbauerWick:2017}
	for sequential computations.
	Consequently, in this work, we focus on the parallel scalability.

	All tests regarding parallelization were done on the distributed memory cluster
	Radon1\footnote{https://www.ricam.oeaw.ac.at/hpc/overview/}
	at RICAM, Linz.
	Radon1 consists of 64 compute nodes each with two 8-core Intel Haswell processors
	(Xeon E5-2630v3, 2.4Ghz) and 128 GB of memory.

	In all our tests, all partitioning strategies (shared, split, default) yielded similar scalability results. However, the shared-type partition required significantly less time than the others. Unless stated otherwise, all figures below are done using the faster shared-type strategy.

	For solving the nonlinear problem we employ a Quasi-Newton scheme, which reassembles the Jacobian only if the reduction of the last Newton iteration is less than $10$, i.e. assemble if
	$\Vert r_k \Vert_\infty > 0.1\, \Vert r_{k-1} \Vert_\infty$.

	The Newton solver is stopped once the nonlinear residual $r$ satisfies
	$\Vert r_k \Vert_\infty < 10^{-6}\Vert r_0 \Vert_\infty$, where $r_0$ is the
	initial Newton residual.

	GMRES iterates until a reduction of $10^3$ is achieved, which yields only a slight increase in the same number of Newton iterations compared to solving the linear systems with a direct method.

	\subsection{Example 1: FSI-2 benchmark}

	\subsubsection{Description}

	The geometry consists of a channel with some given inflow velocity
	profile on the left boundary. Inside the channel is a fixed cylindric
	obstacle with an elastic beam attached to it, as depicted in
	Figure \ref{fsi:geometry}.

	\begin{figure}[!ht]
		\centering
		\begin{tikzpicture}
		\begin{scope}[scale=5.0]
		\coordinate (p0) at (0, 0);
		\coordinate (p1) at (2.5, 0.0);
		\coordinate (p2) at (2.5, 0.41);
		\coordinate (p3) at (0.0, 0.41);

		\coordinate (a1) at (0.0, 0.068);
		\coordinate (a2) at (0.0, 0.136);
		\coordinate (a3) at (0.0, 0.205);
		\coordinate (a4) at (0.0, 0.273);
		\coordinate (a5) at (0.0, 0.341);

		\coordinate (b1) at (0.05, 0.068);
		\coordinate (b2) at (0.1, 0.136);
		\coordinate (b3) at (0.12, 0.205);
		\coordinate (b4) at (0.1, 0.273);
		\coordinate (b5) at (0.05, 0.341);

		\coordinate (f0) at (0.6, 0.19);
		\coordinate (f1) at (0.6, 0.21);
		\coordinate (f2) at (0.24899, 0.21);
		\coordinate (f3) at (0.24899, 0.19);

		\coordinate (C) at (0.2, 0.2);

		\draw (p0) -- (p1) node[midway, below] {$\hat\Gamma_{bottom}$};
		\draw (p1) -- (p2) node[midway, right] {$\hat\Gamma_{out}$};
		\draw (p2) -- (p3) node[midway, above] {$\hat\Gamma_{top}$};
		\draw (p3) -- (p0) node[midway, left] {$\hat\Gamma_{in}$};

		\draw[->] (a1) -- (b1);
		\draw[->] (a2) -- (b2);
		\draw[->] (a3) -- (b3);
		\draw[->] (a4) -- (b4);
		\draw[->] (a5) -- (b5);

		\draw (C) circle (0.05);

		\draw (f2) -- (f1) -- (f0) -- (f3);
		\end{scope}
		\end{tikzpicture}

		\caption{Geometry of the FSI-2 benchmark \cite{TuHrBenchmark}:
			Elastic beam immersed in a flow around a cylinder.}
		\label{fsi:geometry}
	\end{figure}
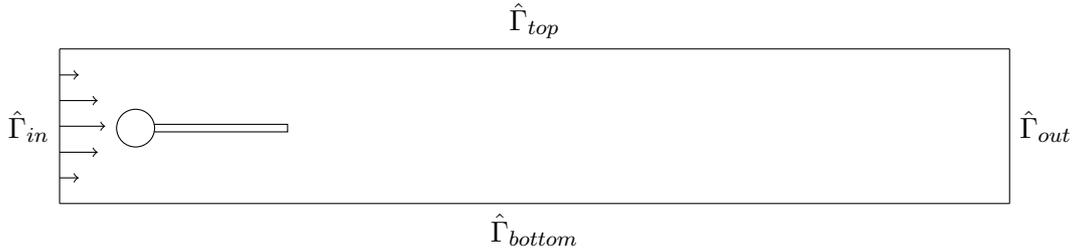

	The inflow is given by $ \hat{v}_f(t, (0, y)) = 6 \frac{y (H - y)}{H^2} s(t) \bar{v}$,
	where the height of the channel $H$ is given by $H = 0.41$,
	and $s(t) = \frac{1}{2} (1 - \cos(\frac{\pi}{2} t))$  is nothing but
	a time-dependent smoothing factor, and $\bar{v} = 1$ is the mean inflow velocity.
	The other quantities of the geometry are given as follows:\\

	\begin{minipage}{.3\textwidth}
		\begin{tabular}{|c|c|}
			\hline
			Quantity & Value \\
			\hline
			channel length & $2.5$ \\
			channel height & $0.41$ \\
			cylinder center & $(0.2, 0.2)$ \\
			cylinder radius & $0.05$ \\
			beam thickness & $0.02$ \\
			beam length & $0.35$ \\
			reference point & $(0.6, 0.2)$ \\
			\hline
		\end{tabular}
	\end{minipage}
	\begin{minipage}{.3\textwidth}
		\begin{tabular}{|c|c|}
			\hline
			Quantity & Value \\
			\hline
			inflow velocity $\bar{v}$ & $1.0$ \\
			density & $10^{3}$ \\
			viscosity & $10^{-3}$ \\
			\hline
		\end{tabular}
	\end{minipage}
	\begin{minipage}{.3\textwidth}
		\begin{tabular}{|c|c|}
			\hline
			Quantity & Value \\
			\hline
			solid density $\hat\rho_s$ & $10^4$ \\
			Lam\'e $\lambda$ & $ 2 \cdot 10^6$ \\
			Lam\'e $\mu$ & $0.5 \cdot 10^{6}$ \\
			Poisson ratio & $0.4$ \\
			\hline
		\end{tabular}
	\end{minipage}

	\vspace*{2ex}
	\noindent
	We refer the reader to FeatFlow\footnote{http://www.featflow.de/en/benchmarks.html} or \cite{TuHrBenchmark}
	for a more detailed description of FSI-2 benchmark.

	\begin{figure}[H]
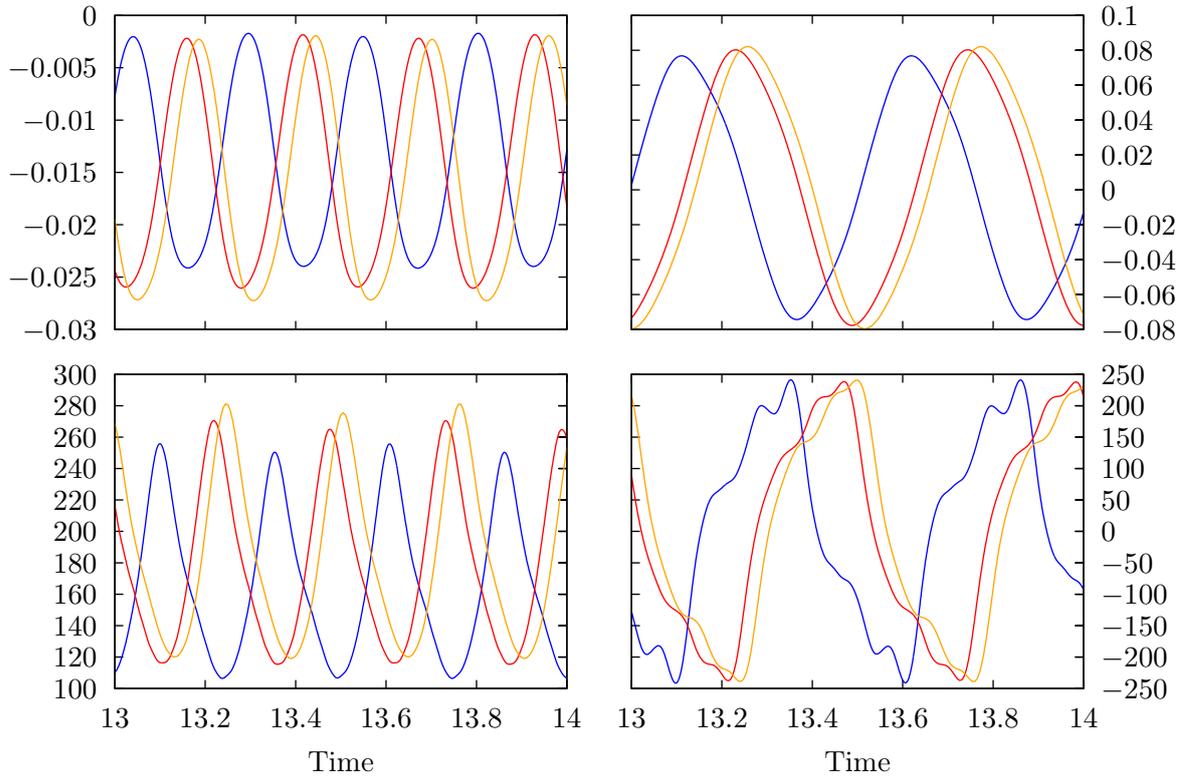

		\center
		\include{fsi2_q1_quantities_inc}
		\caption{FSI-2 benchmark: Drag, lift (bottom row) and
			x/y-displacements (top row)  using different levels $r$ of refinements for $Q(1)-Q(1)-P(0)$  (blue: $r = 2$, red: $r = 3$, orange: $r = 4$).}
		\label{fsi2:q1:quantities}
	\end{figure}

	\newpage
	\subsubsection{Computing and comparison of displacements, drag and lift}
	\label{sec_ex_1_goal_func}

	To check the correctness of our code, we first compare the evolution of
	the deflection of the tip of the elastic beam, drag and lift with the
	results published in the literature \cite{TuHrBenchmark,BuSc06,TW:MMPDE}.

	The displacement is measured at the reference point $(0.6, 0.2)$, located at the very end of the beam. Drag $F_D$ and lift $F_L$ are computed according to the following formula:
	\begin{equation}
	(F_D, F_L) = \int_{S} \hat\sigma_f \hat{n}_f ds,
	\end{equation}
	where $S$ denotes the boundary of flag and circle with the outer unit normal vector $\hat{n}_f$ pointing inside the solid domain.
	The results of these computations are shown in Figure \ref{fsi2:q1:quantities}, using a $Q(1)-Q(1)-P(0)$ discretization, and in Figure \ref{fsi2:q2:quantities} for $Q(2)-Q(2)-P(1)$.
	All configurations yield similar results, but a slight shift in time can be observed.

	\begin{figure}[H]
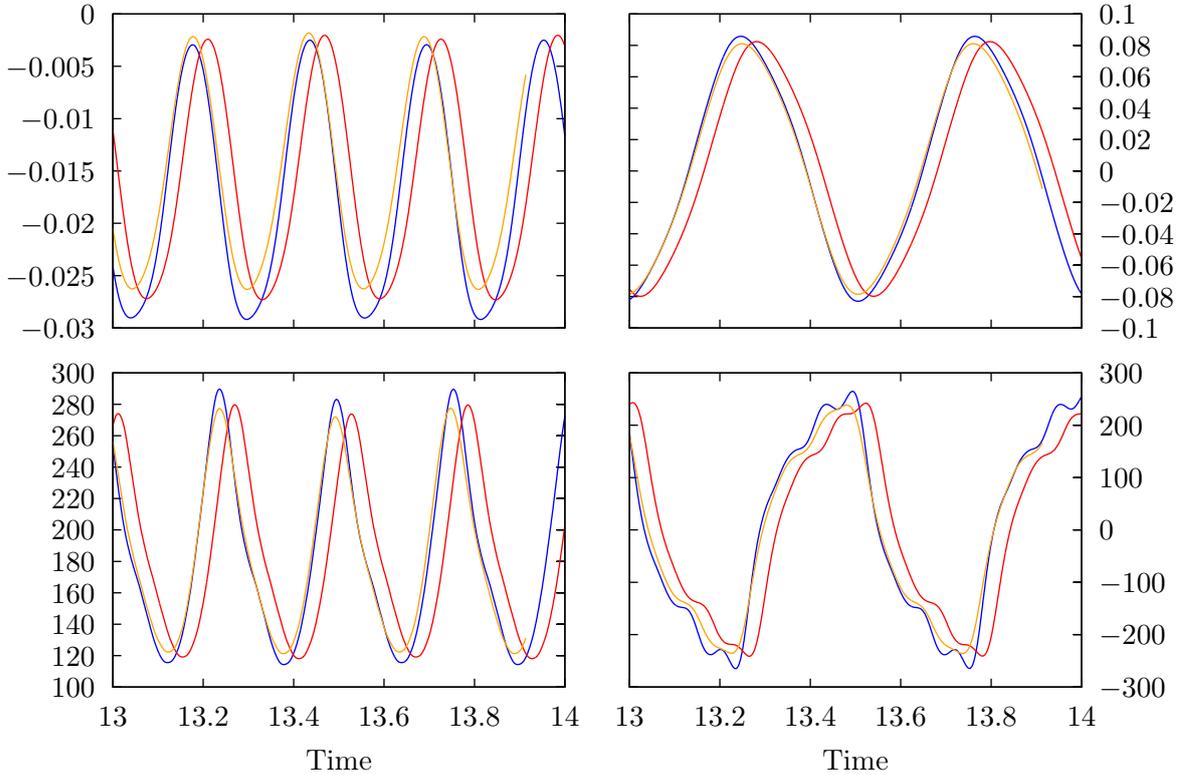

		\center
		\include{fsi2_q2_quantities_inc}
		\caption{FSI-2 benchmark: Drag, lift (bottom row) and x/y-displacements (top row) using different levels  $r$ of refinements for $Q(2)-Q(2)-P(1)$  (blue: $r = 1$, red: $r = 2$, orange: $r = 3$).}
		\label{fsi2:q2:quantities}
	\end{figure}

	\subsubsection{Parallel performance studies}

	\begin{figure}[H]
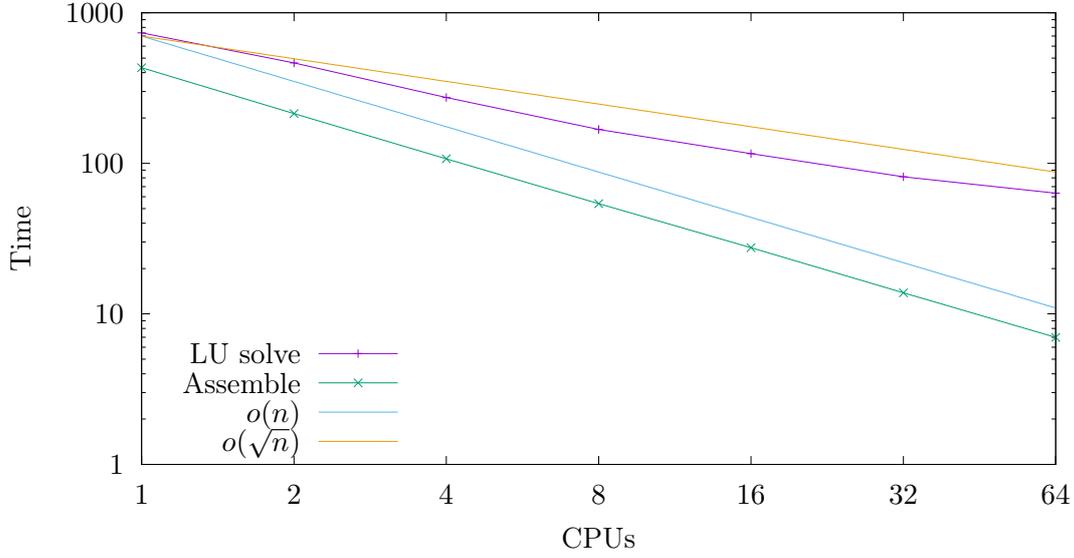

		\center
		\include{scala_direct_inc}
		\caption{FSI-2 benchmark: Strong scalability of the sparse direct solver MUMPS.}
		\label{fig:scala:direct}
	\end{figure}

	\begin{figure}[ht]
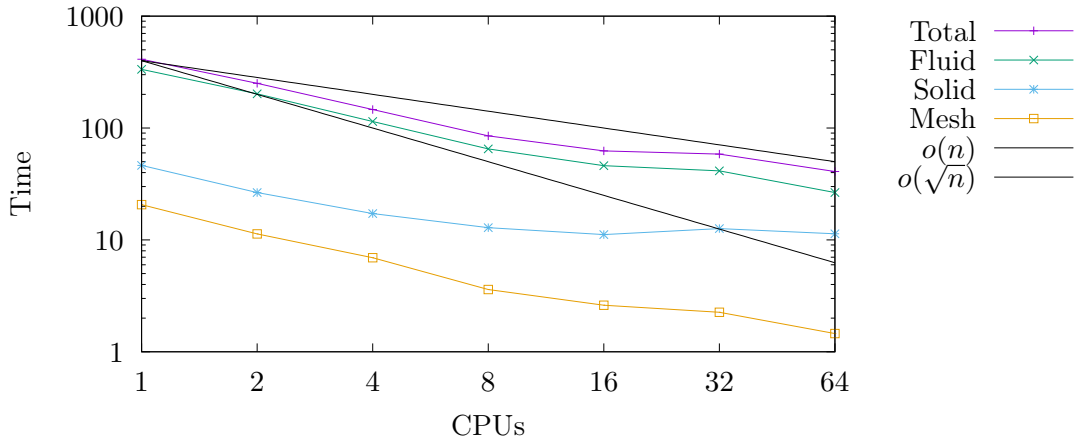

		\centering
		\include{scala_gmres_2d_inc}
		\caption{FSI-2 benchmark: Strong scalability using the
			preconditioned GMRES scheme for approximately $16 \cdot 10^6$
			dofs. Average time given in seconds for the solution of one linear system.}
		\label{fig:scala:gmres}
	\end{figure}

	For comparison, Figure \ref{fig:scala:direct} shows the parallel performance of the sparse direct solver MUMPS solving one monolithic linear system from the FSI-2 simulation. For a small number of cores, the speedup is somewhere between $\mathcal{O}(n)$ and $\mathcal{O}(\sqrt{n})$, but decays when more CPUs are added. Furthermore, it displays the expected perfect scalability of the assembling procedure, rendering the linear solver as the only remaining bottleneck in serial and parallel computations.

	Our preconditioned GMRES yields $\mathcal{O}(\sqrt{n})$ scalability as shown in Figure~\ref{fig:scala:gmres}.  This behavior is in agreement with the results presented in \cite{ParallelFSI}. We note that both methods do not yield the optimal scalability of $\mathcal{O}(n)$.

	The  time required
	for solving the linear system is dominated by the solution time  spent for
	the fluid sub-problem. The mesh problem is the easiest one to solve, requiring just a few AMG-cycles, hence contributing only little to the overall runtime. The solid problem does not scale very well, most likely due to its small size in comparison to the other problems.

	Specifically, using
	$1$ core, the total CPU time to solve the problem with $16\cdot 10^6$ dofs at a single time step
	is $411$ seconds, i.e., approximately $6.8$ minutes. On $4$ cores, the
	computational cost decreases to $146$ seconds, i.e., $2.4$ minutes. Thus we
	achieve a reduction by $64\%$ of the computational time. The further
	decrease using $64$ cores is less significant and drops to $41$ seconds
	per solution of the linear system.

	\subsection{Example 2: Flow around an elastic obstacle}

	\subsubsection{Description}
	This numerical test features a $3d$ flow around an elastic obstacle and is
	motivated from \cite{Richter:GMG}. The computational domain is given by $(0, L) \times (0, H) \times (-H, H)$, with the solid inclusion $(0.4, 0.5) \times (0, h) \times (-0.2, 0.2)$. Similar to the previous test, an inflow velocity is prescribed on the $yz$-plane by $\hat{v}_f(t, (0, y, z)) = \frac{81}{16} \frac{y (H - y) (H^2 - z^2)}{H^4} s(t) \bar{v}$.
	The geometrical parameters and $\bar{v}$ are given in Table~\ref{table:3d},
	whereas the
	fluid and solid parameters are chosen the same as in the FSI-2 benchmark.
	The geometry is illustrated in Figure~\ref{fig:3d}.

	\begin{center}
		\begin{tabular}{|c|c|}
			\hline
			Quantity & Value \\
			\hline
			channel length (x-direction) $L$ & $1.5$ \\
			channel height (y-direction) $H$ & $0.4$ \\
			channel width (z-direction) $2H$ & $0.8$ \\
			obstacle height $h$ & $0.3$ \\
			obstacle width & $0.4$ \\
			obstacle thickness & $0.1$ \\
			inflow velocity $\bar{v}$ & $3.0$ \\
			\hline
		\end{tabular}
		\captionof{table}{Geometry and problem data for Example 2.}
		\label{table:3d}
	\end{center}

	\begin{figure}[H]
		\center
		\includegraphics[width=0.8\textwidth]{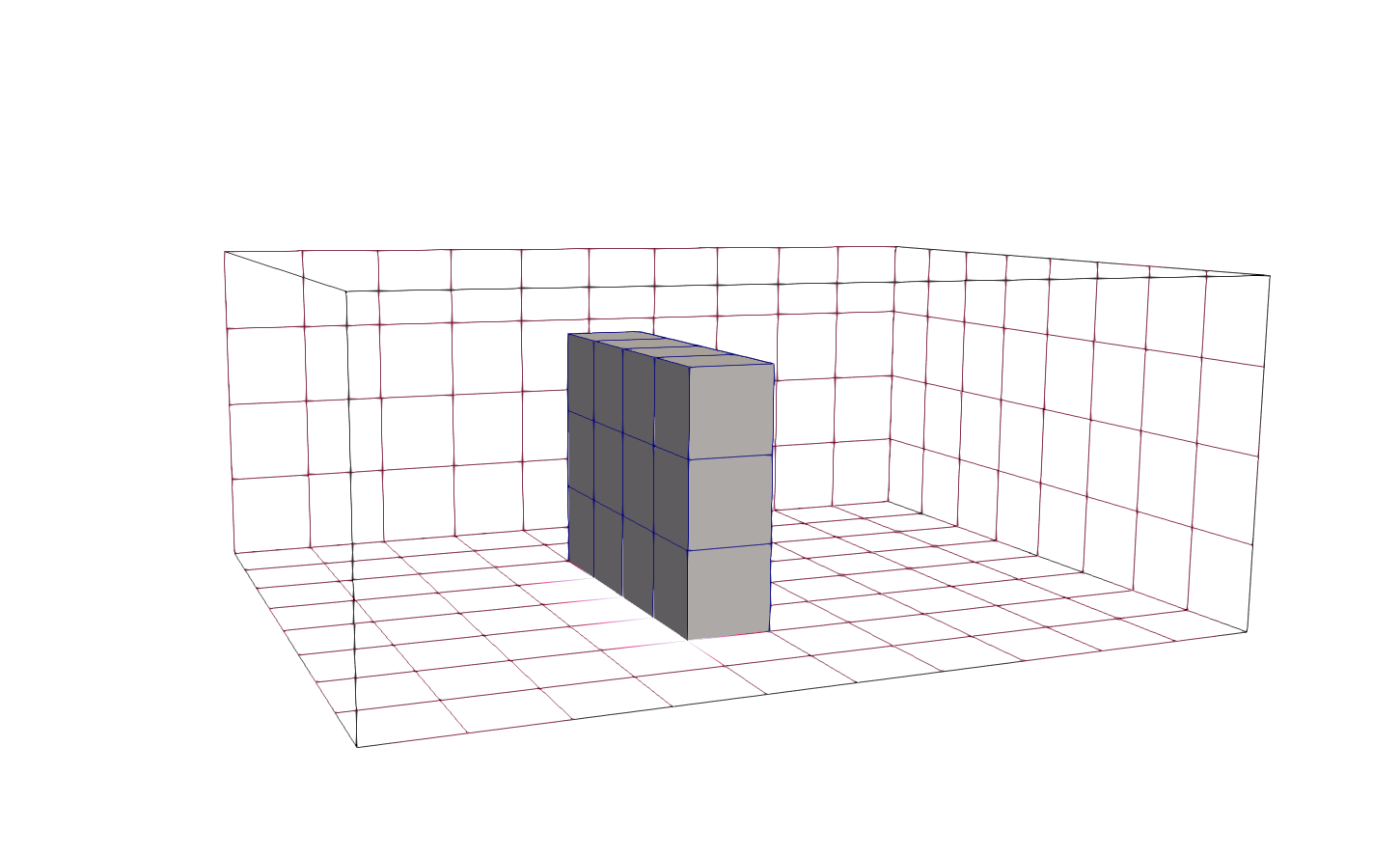}
		\caption{Graphical illustration of the geometry of Example
			2. The elastic obstacle is displayed in dark color. The
			explanation of the axes is given in Table \ref{table:3d}.}
		\label{fig:3d}
	\end{figure}

	\newpage
	\subsubsection{Evaluation of quantities of interest}

	As in Section \ref{sec_ex_1_goal_func}, we first compute the physical quantities of interest. This includes point evaluations at the upper boundary surface ($y = h$) of the solid obstacle at points given below
	The results are presented component-wise in
	Figures~\ref{fsi:richter:q1:quantities:x} -- \ref{fsi:richter:q1:quantities:z}
	using the scheme depicted in Table~\ref{table:points}.

	\begin{center}
		\begin{tabular}{|c|c|}
			\hline
			$P_{1} = (0.4, h, \phantom{-}0.0)$ & $P_{2}= (0.4, h, -0.2)$ \\
			\hline
			$P_{3} = (0.5, h, -0.2)$ & $P_{4}= (0.5, h, \phantom{-}0.0)$ \\
			\hline
		\end{tabular}
		\captionof{table}{Example 2: Evaluation points for the
			displacement of the elastic obstacle.}
		\label{table:points}
	\end{center}

	\begin{figure}
		\center
		\include{fsi_richter_q1_quantities_x_inc}
		\caption{Example 2: $x$-component of displacement $u(P_i)$ using different levels $r$ of refinements (blue: $r = 1$, red $r = 2$).}
		\label{fsi:richter:q1:quantities:x}
	\end{figure}

	\begin{figure}[H]
		\center
		\include{fsi_richter_q1_quantities_y_inc}
		\caption{Example 2: $y$-component of displacement $u(P_i)$ using different levels $r$ of refinements (blue: $r = 1$, red $r = 2$).}
		\label{fsi:richter:q1:quantities:y}
	\end{figure}

	\begin{figure}
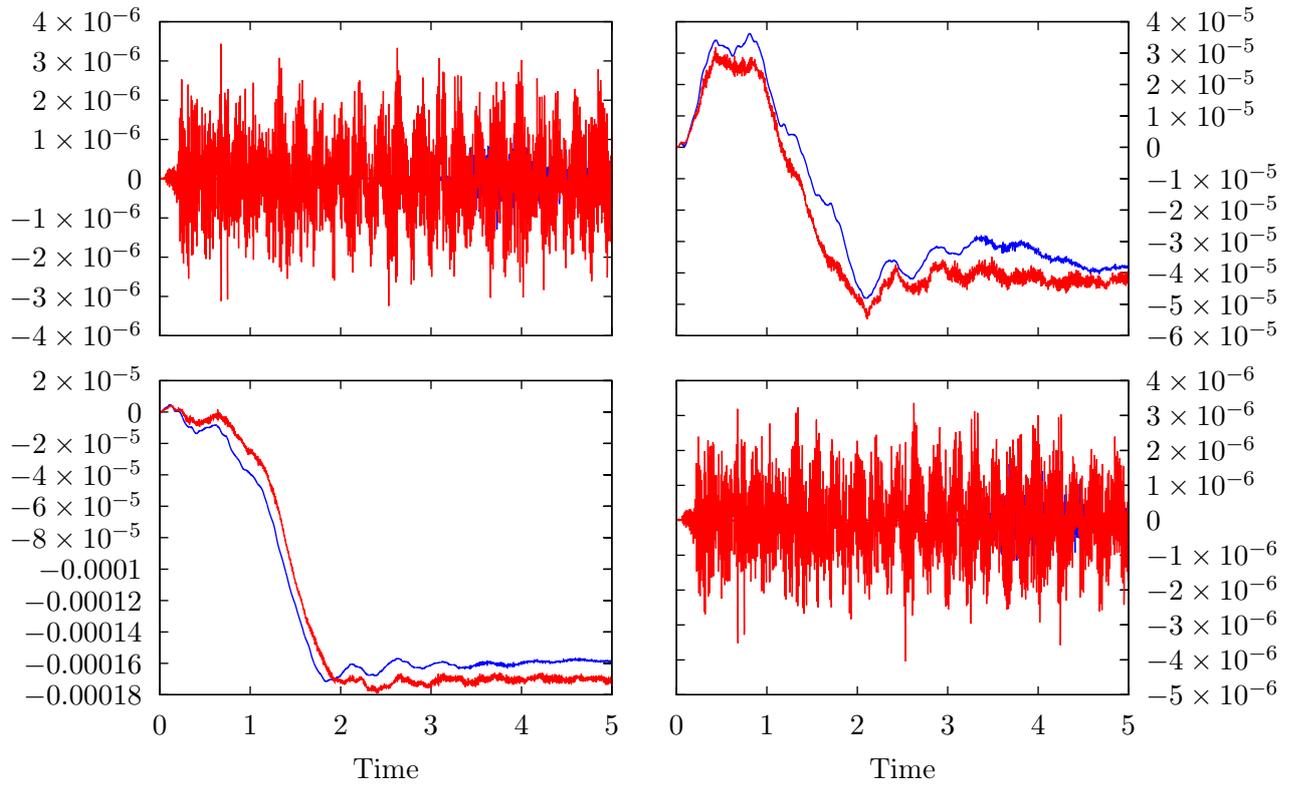

		\center
		\include{fsi_richter_q1_quantities_z_inc}
		\caption{Example 2: $z$-component of displacement $u(P_i)$
			using different levels $r$ of refinements (blue: $r = 1$, red
			$r = 2$). We notice that the oscillations are of order
			$10^{-6}$ and thus numerical noise. Thus, the
			$z$-displacements
			are approximately zero, which was expected due to the
			symmetry of the configuration.}
		\label{fsi:richter:q1:quantities:z}
	\end{figure}

	\newpage

	\subsubsection{Performance studies}

	We first start with comments on the nonlinear and linear solvers followed
	by an analysis of the parallel performance.
	Figure \ref{fig:newton_iter} shows that the number of Newton iterations remains approximately constant at $5-10$ iterations throughout the computations. We note that in $3d$, the number of iterations increases slightly during $h$-refinement. The respective results of the 2d benchmarks are discussed in \cite{JodlbauerWick:2017}.

	\begin{figure}[h!]
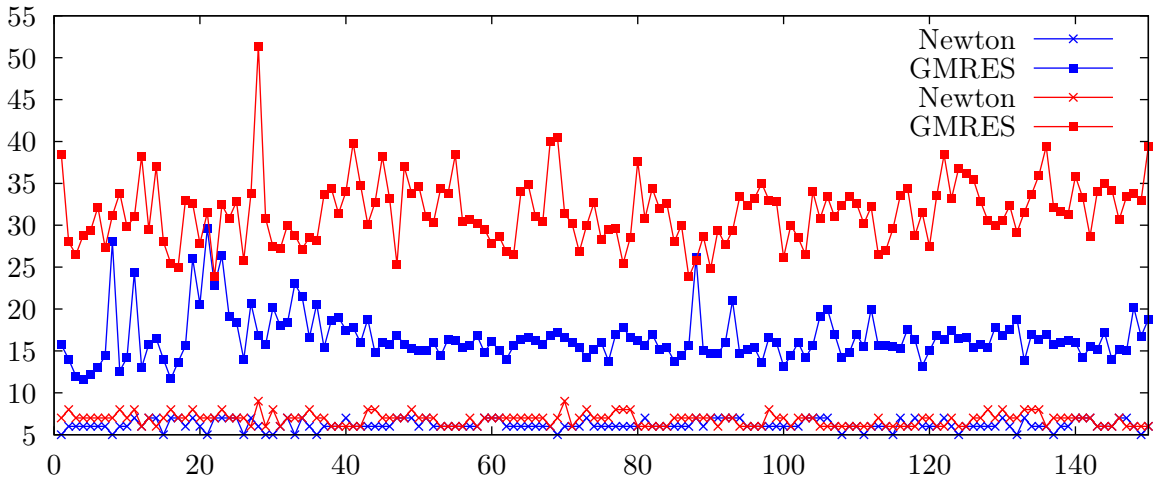

		\centering
		\include{fsi_richter_iter_inc}
		\caption{Number of iterations (Newton, average GMRES) required for the $3d$ test problem (blue: $r = 1$, red $r = 2$).}
		\label{fig:newton_iter}
	\end{figure}

	As we can see in Figure \ref{fig:scala:gmres3d},
	the parallel performance in the $3d$ case yields similar results as
	in the case of the 2d benchmark problem FSI-2.
	First, these findings show that our code
	is dimension-independent and can be employed for 2d and 3d simulations.
	Second, the obtained scalability is again in the range of
	$\mathcal{O}(\sqrt{n})$. Due to the higher computational cost in $3d$,
	tests were done on $16$ cores upwards. Specifically, for
	$16$ cores, the total CPU time to solve the linear problem with $14 \cdot 10^6$ dofs at a single time step
	is $2605$ seconds, i.e., approximately $43$ minutes. On $256$ cores, the
	computational cost decreases to $431$ seconds, i.e., $7.2$ minutes. Thus we
	achieve a reduction by $84\%$ of the computational time.

	\begin{figure}[h!]
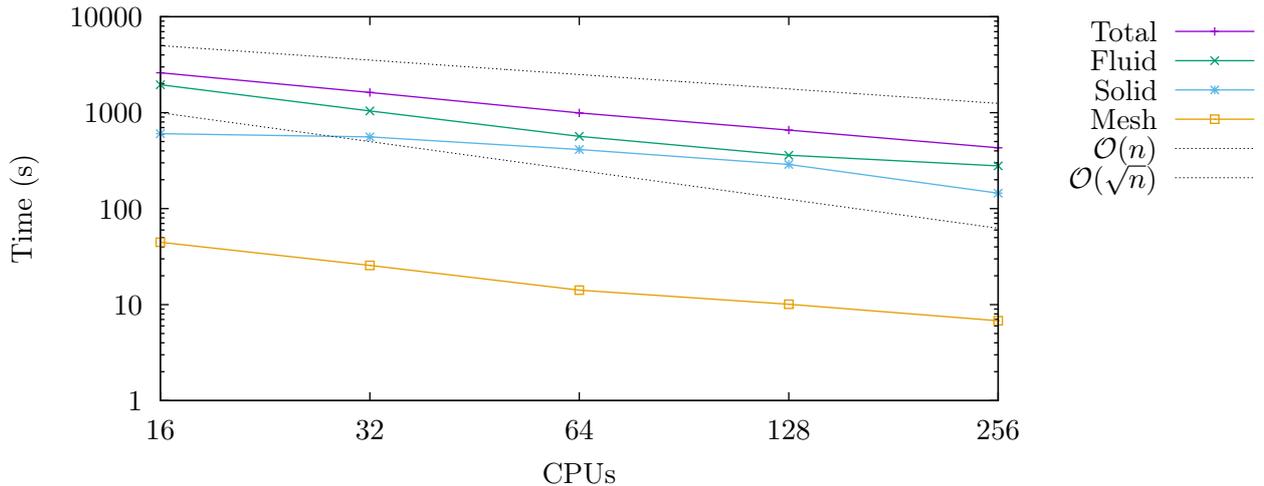

		\center
		\include{scala_3d_inc}
		\caption{Example 2: Strong scalability using the preconditioned GMRES for approximately $14 \cdot 10^6$ dofs in $3d$. Average time given in seconds for the solution of one linear system.}
		\label{fig:scala:gmres3d}
	\end{figure}

	\section{Conclusions}
	In this work, we developed a framework for the parallel solution of
	monolithic fluid-structure interaction (FSI) problems. The FSI problems
	is formulated with the help of the arbitrary Lagrangian-Eulerian technique.
	To cope with large solid deformations, we adopted a nonlinear harmonic
	mesh motion model. The key goals have been on the development of
	approximate block-LDU preconditioners in which we used Schur complement
	arguments.
	The parallel implementation is based on a combination of different
	software packages, which are mainly joined in the C++ package deal.II.
	To date only very few other studies have been published with
	satisfying results for FSI problems using high performance parallel computing
	and showing satisfactory scalability. This has been an important motivation of this
	work. Indeed, block-wise preconditioners perform pretty
	well in the serial case, but often lack perfect parallel scalability. This
	confirms the findings in \cite{ParallelFSI,ParallelFSI2}.
	Additionally, this type of preconditioner may be applied to any coupled
	problem exhibiting a $3 \times 3$ block-structure similar to the FSI system,
	provided that solvers for the subproblems are available. A similar
	preconditioner is employed in \cite{FSI:AMG},
	where additionally such a block-wise strategy is used as a smoother
	inside an AMG method. However, no scalability results have been reported therein.
	In our numerical tests, we have provided detailed studies for
	a challenging 2D benchmark problem and 3D test case. Both configurations
	are time-dependent and exhibit large solid deformations. In view of these
	aspects, the outcome of our results is more than satisfying.

	\section{Acknowledgments}

	This work has been supported by the Austrian Science Fund (FWF) grant No.
	P-29181
	`Goal-Oriented Error Control for Phase-Field Fracture Coupled to Multiphysics Problems'.

	\newpage

	\bibliographystyle{abbrv}
	\bibliography{literature}

\end{document}